\documentclass[a4paper,10pt]{article}

\usepackage{ae,lmodern}
\usepackage[english,greek,frenchb,french]{babel}
\usepackage[utf8]{inputenc}  
\usepackage[T1]{fontenc}
\usepackage{url,csquotes}
\usepackage{float}
\usepackage{amsfonts,amssymb,enumerate}
\usepackage{amsmath}
\usepackage{authblk}
\usepackage[shortlabels]{enumitem}
\allowdisplaybreaks[1]
\usepackage{amsthm}
\usepackage{graphicx}
\usepackage{bbm}
\usepackage{listings}
\usepackage{titling}
\usepackage{dsfont}
\usepackage{color}
\usepackage{enumitem}
\usepackage{bmpsize}
\usepackage{multibib}
\newcites{perso}{Publications and preprints}
\usepackage[hidelinks,hyperfootnotes=false]{hyperref}
\newcommand{\E}{\mathbb{E}}

    \newcommand{\Prb}{\mathbb{P}}

		\newcommand{\cO}{\mathcal{O}}

		\newcommand{\cT}{\mathcal{T}}

	\newcommand{\sR}{\mathbb{R}}

    \newcommand{\ep}{\varepsilon}

    \newcommand{\ind}{\mathds{1}}

 \theoremstyle{plain}   
 \newtheorem{thm}{Theorem}[section]
\newtheorem{lem}[thm]{Lemma}
\newtheorem{cor}[thm]{Corollary}
\newtheorem{prop}[thm]{Proposition}
\newtheorem{rk}[thm]{Remark}

\numberwithin{equation}{section}

\title{Subcritical sharpness for multiscale Boolean percolation}
\date{}
\author{Barbara Dembin}

 \affil[1]{D-MATH, ETH Z\"urich,
    Switzerland.}
\begin{document}
\selectlanguage{english}
\maketitle
\begin{abstract}
    We consider a multiscale Boolean percolation on $\sR^d$ with radius distribution $\mu$ on $[1,+\infty)$, $d\ge 2$. The model is defined by superposing the original Boolean percolation model with radius distribution $\mu$ with a countable number of scaled independent copies. The $n$-th copy is a Boolean percolation with radius distribution $\mu|_{[1,\kappa]}$ rescaled by $\kappa^{n}$. We prove that under some regularity assumption on $\mu$, the subcritical phase of the multiscale model is sharp for $\kappa $ large enough. Moreover, we prove that the existence of an unbounded connected component depends only on the fractal part (and not of the balls with radius larger than $1$).
\end{abstract}

\section{Introduction}
\paragraph{Overview}
Boolean percolation was introduced by Gilbert in \cite{gilbert} as a continuous version of  Bernoulli percolation, introduced by Broadbent and Hammersley \cite{BroadbentHammersley}.  We consider a Poisson point process of intensity $\lambda>0$ on $\sR^d$ and on each point, we center a ball of potentially random radius. In Boolean percolation we are interested in the connectivity properties of the occupied set: it is defined as the subset of $\mathbb R^d$ consisting of all the points covered by at least one ball. 
This model undergoes a phase transition in $\lambda$ for the existence of an unbounded connected component of balls. For $\lambda<\lambda_c$, all the connected components are bounded, and for $\lambda>\lambda_c$, there exists at least one unbounded connected component.  


\paragraph{Boolean model}
Let $d\geq 2$. Denote by $\|\cdot\|$ the $\ell_2$-norm on $\sR^ d$.  For $r>0$ and $x\in\sR^d$, set \[\mathrm B^x_r:=\{y\in\sR^d:\,\|y-x\|\leq r\}\quad \text{and}\quad \partial  \mathrm B^x_r:=\{y\in\sR^d:\,\|y-x\|= r\}\] for the closed ball of radius $r$ centered at $x$ and its boundary. For short, we will write $\mathrm B_r$ for $\mathrm B_ r^0$.
For a subset $\eta$ of $ \sR^d\times\sR_+$, we define
\[\cO(\eta):=\bigcup_{(z,r)\in\eta}\mathrm B_r^z.\]
Let $\mu$ be a distribution on $\mathbb R_+$ representing the distribution on the radius. Let $\eta$ be a Poisson point process of intensity $\lambda dz\otimes \mu$ where $dz$ is the Lebesgue measure on $\sR^d$.
Write $\Prb_{\lambda,\mu}$ for the law of $\eta$ and $\E_{\lambda,\mu}$ for the expectation under the law $\Prb_{\lambda,\mu}$. 

We say that two points $x$ and $y$ in $\sR^d$ are connected by $\eta$, if there exists a continuous path in $\cO (\eta)$ that joins $x$ to $y$. We say that two sets $A$ and $B$ are connected if there exists $x\in A$ and $y\in B$ such that $a$ and $b$ are connected by $\eta$. We denote by $\{A\longleftrightarrow B\}$ this event.

Define for every $\lambda\ge 0$ and $\mu$, the probability of percolation $$\theta_{\mu} (\lambda):=\lim_{r\rightarrow \infty}\Prb_{\lambda,\mu}\left( 0\longleftrightarrow \partial \mathrm B_r\right).$$
We define the critical parameter associated to the existence of an infinite connected component:
$$\lambda_c(\mu):=\sup\left\{\lambda\geq 0: \theta_{\mu}(\lambda)=0\right\}.$$
 We will work with measures $\mu$ such that
\begin{align}\label{cond:mu}
\int_{0}^\infty t^dd\mu(t)<\infty.
\end{align}
Hall proved in \cite{hall} that this condition is necessary to avoid that all the space is covered.
Under the minimal assumption \eqref{cond:mu}, Gou\'er\'e proved in \cite{gouere} that $0<\lambda_c(\mu)<\infty$.
We also define the following critical parameter:
\[\widehat \lambda_c(\mu):=\inf\left\{\lambda\geq 0:\,\inf_{r>0}\Prb_{\lambda,\mu}(\mathrm B_r\longleftrightarrow\partial \mathrm B_{2r})>0\right\}.\]
Knowing that $\lambda\le\widehat \lambda_c(\mu) $ enables to do renormalization arguments and deduce a lot of properties (see \cite{DCRT,GouereTheret}). Hence, the equality $\widehat \lambda_c(\mu) =\lambda_c(\mu)$ implies that we have a good control on the subcritical regime. If the equality occurs, we say that we have subcritical sharpness. This equality has been proved under moment condition on $\mu$ (see \cite{ATT18,DCRT,ziesche}) and for almost all power-law distributions (see \cite{dembintassion}).

 \paragraph{Multiscale Boolean percolation}
The model of multiscale Boolean percolation consists of an infinite superposition of independent copies of Boolean percolation at different scales.
Let $\mu$ be a finite distribution on $[1,+\infty)$ that satisfies \eqref{cond:mu}.
Let $\kappa>1$. Let $\lambda>0$. For a set $E\subset \sR^d\times\sR_+$, write $E/\kappa$ for the set $\{x/\kappa,x\in E\}$.
We denote by $$\eta_\kappa(\lambda):=\eta ^{(0)}(\lambda)\cup\bigcup_{i=1}^\infty \frac{1}{\kappa^i }(\eta^{(i)}(\lambda)\cap (\sR^d\times [1,\kappa]))$$
where $(\eta^{(i)}(\lambda))_{i\geq 1}$ are i.i.d. Poisson point process of intensity $\lambda\, dz\otimes \mu$. Note that every point in $\cO(\eta_\kappa(\lambda))$ is almost surely covered. Yet, it does not necessarily imply that there exists an unbounded connected component as it does not prevent the existence of a blocking surface of null Lebesgue measure.

We are interested in the percolation properties of $\cO(\eta_\kappa(\lambda))$. Let $\mu_\kappa$ be the distribution such that $\eta_\kappa(\lambda)$ is a Poisson point process of intensity $\lambda dz\otimes \mu_\kappa$. The distribution $\mu_\kappa$ has an infinite mass but is $\sigma$-finite. We will explicit its expression later.

We will here work under the following assumption
\begin{equation}\label{hyp}
    \exists \kappa_0>1\quad \forall \kappa\ge \kappa_0 \qquad \sup_{a\ge \kappa}\sup_{r\ge 1}\frac{a^d\mu([a r,a\kappa])}{\mu([r,\kappa])}\le 1\,
\end{equation}
with the convention $0/0=0$.
This assumption is in particular satisfied for distributions with compact support or distributions of the form $f(r)r^{-(d+1+\delta)}\ind_{r\ge 1}dr$ where $f$ is a non-increasing function such that $0<\inf f <\sup f <\infty$ and $\delta>0$.
The following theorem is the main result of the paper. It states that there is subcritical sharpness for the fractal distribution $\mu_\kappa$ and that the existence of an unbounded connected component does not depend on the large balls.
\begin{thm}\label{thm:main} Let $\mu$ that satisfies assumption \eqref{hyp}. Let $\kappa_0$ be as in \eqref{hyp}. For any $\kappa\ge\kappa_0$, we have \[\lambda_c(\mu_\kappa) =\widehat\lambda_c(\mu_\kappa)=\lambda_c(\mu_\kappa|_{[0,1]}).\]
\end{thm}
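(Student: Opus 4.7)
The plan is to reduce Theorem~\ref{thm:main} to a single non-trivial inequality, namely $\lambda_c(\mu_\kappa|_{[0,1]})\le\widehat\lambda_c(\mu_\kappa)$. The two companion inequalities $\widehat\lambda_c(\mu_\kappa)\le\lambda_c(\mu_\kappa)\le\lambda_c(\mu_\kappa|_{[0,1]})$ are easy: the first is the general fact that the existence of an infinite cluster produces uniformly positive crossing probabilities, and the second follows from the stochastic domination $\mu_\kappa|_{[0,1]}\le\mu_\kappa$. One therefore fixes $\lambda<\lambda_c(\mu_\kappa|_{[0,1]})$ and seeks to show $\inf_{r>0}\Prb_{\lambda,\mu_\kappa}(\mathrm B_r\longleftrightarrow\partial\mathrm B_{2r})=0$. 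I set $\pi(r):=\Prb_{\lambda,\mu_\kappa}(\mathrm B_r\longleftrightarrow\partial\mathrm B_{2r})$ and $\tilde\pi(r):=\Prb_{\lambda,\mu_\kappa|_{[0,1]}}(\mathrm B_r\longleftrightarrow\partial\mathrm B_{2r})$.

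The heart of the argument is a Gou\'er\'e-style renormalization inequality adapted to the multiscale setting. Any crossing of $\mathrm B_{2\kappa r}\setminus\mathrm B_{\kappa r}$ is either carried by a single ball of radius $\ge r$ connecting the two boundaries, or is a chain of balls of radius $<r$, which by a BK/Poisson decomposition splits into two disjoint half-scale sub-crossings. This yields
\[
\pi(\kappa r)\ \le\ C_1\,\pi(r)^{2}\ +\ C_2\,\lambda\int_{r}^{\infty}s^{d}\,d\mu(s),
\]
and analogously for $\tilde\pi$ with \emph{vanishing} error, since $\mu_\kappa|_{[0,1]}$ contains no balls of radius $\ge 1$. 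Assumption~\eqref{hyp} is the critical input to control the error term: it ensures that the tail integral is bounded uniformly in $r$ and, more importantly, can eventually be absorbed into the quadratic term as the induction proceeds.

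To initialize, I would first establish sharpness for the fractal-only model $\mu_\kappa|_{[0,1]}$ by exploiting its self-similarity: rescaling by $\kappa$ transforms it into itself augmented by an independent layer of $\mu|_{[1,\kappa]}$-balls, so by monotonicity $\tilde\pi$ is non-increasing along geometric sequences $r_0,\kappa r_0,\dots$, converging to some limit $p_\infty\ge 0$ satisfying $p_\infty\le C_1 p_\infty^{2}$. Hence $p_\infty=0$ or $p_\infty\ge 1/C_1$; the latter alternative would produce percolation via a standard block-renormalization argument, contradicting $\lambda<\lambda_c(\mu_\kappa|_{[0,1]})$. Consequently $\tilde\pi(r_0)$ is small at some large scale $r_0$. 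A coupling argument combined with \eqref{hyp} gives that $\pi(r_0)$ is then also small, and iterating the renormalization inequality drives $\pi(\kappa^n r_0)\to 0$, producing the desired $\inf_r\pi(r)=0$.

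\emph{The main obstacle} is that $\mu_\kappa$ has infinite total mass, so the occupied set $\cO(\eta_\kappa(\lambda))$ covers $\sR^d$ almost surely; this invalidates a direct application of the classical BK-type decomposition, which relies on non-overlapping spatial witnesses inside the occupied set. The resolution is to perform the decomposition on a truncation to balls of radius in $[\delta,R]$ and pass carefully to $\delta\to 0$ and $R\to\infty$, where \eqref{hyp} again provides the uniformity needed for the limit. Tracking the errors through this limit and through each iteration of the renormalization, while matching them scale-by-scale against the fractal layers of $\mu_\kappa|_{[0,1]}$, is where the technical heart of the proof lies.
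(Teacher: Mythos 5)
Your reduction to the single inequality $\lambda_c(\mu_\kappa|_{[0,1]})\le\widehat\lambda_c(\mu_\kappa)$, and the idea of transferring smallness of crossing probabilities from the fractal-only model to the full model, both match the paper. But there is a genuine gap at the initialization step, i.e.\ precisely where sharpness for the compactly supported measure $\mu_\kappa|_{(0,1]}$ must be established. Your dichotomy $p_\infty=0$ or $p_\infty\ge 1/C_1$ is fine, but you dismiss the second alternative by asserting that uniformly positive crossing probabilities ``produce percolation via a standard block-renormalization argument.'' That is not true: block renormalization requires crossing probabilities close to $1$ (above a dimension-dependent threshold on the renormalized lattice), not merely bounded below by $1/C_1$, where $C_1$ is a possibly huge combinatorial constant coming from the union bound in the Gou\'er\'e decomposition, and there is no RSW-type theory here to boost a small lower bound to a large one. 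Indeed, the implication ``$\inf_r\tilde\pi(r)>0\Rightarrow$ percolation'' is exactly the statement $\widehat\lambda_c=\lambda_c$ for the measure $\mu_\kappa|_{(0,1]}$, i.e.\ the sharpness you are trying to prove; your argument only yields $\inf_r\tilde\pi(r)=0$ for $\lambda<\widehat\lambda_c(\mu_\kappa|_{(0,1]})$, and is therefore circular at the crucial point. The paper supplies the missing input as Proposition~\ref{prop:ziesche}: for $\lambda<\lambda_c(\mu_\kappa|_{[0,1]})$ one has exponential decay of $\Prb_{\lambda,\mu_\kappa|_{(0,1]}}(\mathrm B_1\longleftrightarrow\partial\mathrm B_l)$, obtained by adapting Ziesche's version of the Duminil-Copin--Tassion $\varphi_\lambda(S)$ argument to the $\sigma$-finite intensity $dz\otimes\mu_\kappa|_{(0,1]}$. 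Some such sharpness input for the bounded-support fractal measure is unavoidable; without it your renormalization scheme cannot start.

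Two secondary points. First, you attribute to assumption~\eqref{hyp} the control of the tail $\int_r^\infty s^d\,d\mu(s)$; that tail tends to $0$ already from the moment condition~\eqref{cond:mu}, and this is all the paper uses to handle the large balls (a direct first-moment bound on the number of balls of radius $\ge\kappa^j$ meeting $\mathrm B_{2l\kappa^j}$ --- no BK decomposition or $\delta\to0$, $R\to\infty$ truncation is needed, since the intensity is $\sigma$-finite). The actual role of~\eqref{hyp} is the stochastic domination $\cT_{\kappa^k}\mu|_{[\kappa^k,\kappa^{k+1}]}\preceq\mu|_{[1,\kappa]}$, whence $\cT_{\kappa^j}\mu_\kappa|_{(0,\kappa^j]}\preceq\mu_\kappa|_{(0,1]}$ (Proposition~\ref{prop}); this replaces your Gou\'er\'e-style quadratic recursion entirely and lets one compare the truncated full model at scale $\kappa^j$ directly with the fractal-only model at scale $1$, where Proposition~\ref{prop:ziesche} applies. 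Second, your self-similarity/monotonicity observation for $\tilde\pi$ along geometric scales is correct and is close in spirit to that domination, but as used it only feeds the circular step above.
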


 \paragraph{Idea of the proof}
The proof relies on the following key observation. Thanks to condition \eqref{hyp}, for $\kappa\ge \kappa_0$, we can prove that
the Poisson model with intensity $\lambda dz\otimes \mu_\kappa|_{[0,1]}$ stochastically dominates the Poisson model with intensity $\lambda dz\otimes \mu_\kappa|_{[0,\kappa^j]}$ rescaled by $\kappa^j$. Since the support of the distribution $ \mu_\kappa|_{[0,1]}$ is bounded, it is possible to prove subcritical sharpness for this distribution using the standard $\varphi_p(S)$ argument introduced by Duminil-Copin--Tassion in \cite{D-CT} in the context of standard percolation and generalized in the context of Boolean percolation by Ziesche \cite{ziesche}. Using this argument, we can prove that when $\lambda<\lambda_c( \mu_\kappa|_{[0,1]})$, there is exponential decay of the probability of connection. Together with the stochastic domination, we can prove that when $\lambda<\lambda_c( \mu_\kappa|_{[0,1]})$ we have 
\[\inf_{r>0}\Prb_{\lambda,\mu_\kappa}(\mathrm B_r\longleftrightarrow\partial \mathrm B_{2r})=0\]
and $\lambda<\widehat \lambda_c(\mu_\kappa)$. This yields $\lambda_c( \mu_\kappa|_{[0,1]})\le \widehat \lambda_c(\mu_\kappa)\le\lambda_c(\mu_\kappa)$.  The coincidence of these three critical points follows from the previous inequality together with  $\lambda_c( \mu_\kappa|_{[0,1]})\ge\lambda_c(\mu_\kappa)$.

\paragraph{Background}
In previous works on multiscale Boolean percolation, a slightly different definition was used. Define for $\kappa\ge 1$
$$\widetilde \eta _\kappa(\lambda):=\eta ^{(0)}(\lambda)\cup\bigcup_{i=1}^\infty \frac{\eta^{(i)}(\lambda)}{\kappa^i }$$
where $(\eta^{(i)}(\lambda))_{i\geq 1}$ are i.i.d. Poisson point process of intensity $\lambda\, dz\otimes \mu$. Let $\widetilde\mu_\kappa$ be the distribution such that $\widetilde\eta_\kappa(\lambda)$ is a Poisson point process of intensity $\lambda dz\otimes\widetilde \mu_\kappa$. 
With this definition, the range of the radius of the different scaled copies are no longer disjoint, the condition \eqref{cond:mu} is not enough to ensure that the multiscale Boolean model exhibits a non-trivial phase transition. Gouéré proved in \cite{Gouere2009} that $\lambda_c(\widetilde\mu_\kappa)>0$ if and only if \begin{equation}\label{cond:mu2}
    \int_{t\ge 1}t^d\log(t)d\mu(t)<\infty.
\end{equation}
If this condition is not satisfied, the balls with radius greater than $1$ have an infinite mass and $\lambda_c(\widetilde\mu_\kappa)=0$.
\begin{rk}Note in our definition of multiscale percolation, the range of radius among the different scaled copies are disjoint. This enables to remove assumption \eqref{cond:mu2}.
\end{rk}

The Boolean multiscale model was first studied for the distribution $\mu=\delta_1$ by Menshikov--Popov--Vachkovskaia in \cite{Menshikov2001}. They proved that for $\lambda<\lambda_c(\delta_1)$ and $\kappa$ large enough the multiscale model does not percolate.

They later extended in \cite{Menshikov2003} their result to more general distribution $\mu$ that satisfy the following self-similarity condition
\begin{equation*}
    \lim_{a\rightarrow \infty}\sup_{r\ge 1}\frac{a^d\mu([a r,+\infty))}{\mu([r,+\infty))}=0\,
\end{equation*}
and for $\lambda>0$ such that
\begin{equation}\label{cond:lambda}
    \lim_{r\rightarrow\infty }r^d\Prb_{\lambda,\mu}(\mathrm B_r\longleftrightarrow\partial \mathrm B_{2r})=0.
\end{equation}
Note that the condition \eqref{cond:lambda} is quite restrictive since for distributions $\mu$ with an infinite $2d$-moment, there exists no such positive $\lambda$.

The condition \eqref{cond:lambda} was relaxed later by Gouéré in \cite{Gouere2014}, who proved that under the assumption \eqref{cond:mu2}, for $\lambda<\widehat\lambda_c(\mu)$ and $\kappa$ large enough, the multiscale model does not percolate.

\section {Proofs}
\subsection{Proof of Theorem \ref{thm:main}}
In this section, we prove the main theorem. We will need the two following propositions.
This proposition is an adaptation of \cite{ziesche}, the only difference is that the intensity is not finite but locally finite.
\begin{prop}\label{prop:ziesche}Let $\kappa>1$ and $\lambda<\lambda_c(\mu_\kappa|_{[0,1]}) $. There exists $c_\kappa>0$ depending on $\kappa$ and $\lambda$ such that 
\begin{equation}\label{eq:ineqziesche}
 \Prb_{\lambda,\mu_\kappa|_{(0,1]}}(\mathrm B_{1}\longleftrightarrow \partial\mathrm B_{l})\leq \exp(-c_\kappa l)
\end{equation}
\end{prop}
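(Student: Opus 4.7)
The plan is to adapt Ziesche's proof of subcritical sharpness for Boolean percolation \cite{ziesche} (a continuum version of the Duminil-Copin--Tassion argument \cite{D-CT}) to the case where the radius distribution has infinite total mass but is supported on a bounded interval. The structure of the argument is unchanged; only the verification of integrability at various steps needs to be revisited.

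Specifically, for $L\geq 1$, I would introduce a DCT-type function
\[\varphi_\lambda(L) := \lambda \int \int \Prb_{\lambda, \mu_\kappa|_{(0,1]}}\bigl(\mathrm B_1 \xleftrightarrow{\mathrm B_L} \mathrm B^z_r\bigr)\, \ind_{\{\mathrm B^z_r \cap \mathrm B_L \neq \emptyset,\, \mathrm B^z_r \not\subset \mathrm B_L\}} \, dz \, d\mu_\kappa(r),\]
where $\xleftrightarrow{\mathrm B_L}$ denotes a connection realized by balls of $\eta$ contained in $\mathrm B_L$. By the Mecke formula, $\varphi_\lambda(L)$ equals the expected number of balls of $\eta$ that exit $\mathrm B_L$ and are connected to $\mathrm B_1$ from inside $\mathrm B_L$. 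I would then run the Duminil-Copin--Tassion dichotomy: either there exists $L$ with $\varphi_\lambda(L) < 1$, or $\varphi_\lambda(L) \geq 1$ for every $L$, in which case a Russo-type differential inequality forces $\theta_{\mu_\kappa|_{(0,1]}}(\lambda) > 0$, hence $\lambda \geq \lambda_c(\mu_\kappa|_{[0,1]})$. Since this contradicts the hypothesis, we are in the first case, and a standard telescoping argument over the concentric balls $\mathrm B_L, \mathrm B_{2L}, \ldots, \mathrm B_{nL}$ gives $\Prb_{\lambda, \mu_\kappa|_{(0,1]}}(\mathrm B_1 \leftrightarrow \partial \mathrm B_{nL}) \leq C \, \varphi_\lambda(L)^n$, yielding the desired exponential decay with rate $c_\kappa = |\log \varphi_\lambda(L)|/L > 0$.

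The main obstacle is ensuring that every integral appearing in Ziesche's argument -- especially $\varphi_\lambda(L)$ itself and the Russo-type derivative of the connection probability in $\lambda$ -- remains finite despite $\mu_\kappa|_{(0,1]}$ having infinite total mass. The key observation making this possible is that the events entering Russo's formula for $\{\mathrm B_1 \leftrightarrow \partial \mathrm B_l\}$ only involve balls pivotal for this connection; by a Palm-calculus argument for $\sigma$-finite Poisson processes, the expected number of such pivotal balls is controlled by the geometry of the cluster of $\mathrm B_1$ (a perimeter-type quantity) rather than by the total intensity in $\mathrm B_L$. A convenient way to formalize this is to work first with the truncated distributions $\mu_\kappa|_{[\varepsilon,1]}$, which have finite mass so that Ziesche applies verbatim, establish the required estimates uniformly in $\varepsilon$, and then pass to the limit $\varepsilon \to 0$ by monotone convergence -- relying on the fact that the connection events of interest are measurable with respect to the Poisson process even when $\cO(\eta) = \sR^d$ almost surely.
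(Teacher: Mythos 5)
Your high-level plan --- adapting Ziesche's continuum version of the Duminil-Copin--Tassion argument, exploiting that $\mu_\kappa|_{(0,1]}$ has bounded support even though its total mass is infinite --- is the same as the paper's. But the two devices you introduce to deal with the infinite mass each contain a gap. First, you run the dichotomy over the balls $\mathrm B_L$ only. The second horn of the dichotomy (the Margulis--Russo differential inequality forcing $\theta>0$) requires $\varphi_\lambda(S)\geq 1$ for the \emph{random} sets $S$ that appear when one restricts Russo's formula to pivotal balls --- essentially the complement of the set of points connected to $\partial\mathrm B_l$ --- and these are not balls. Knowing $\varphi_\lambda(\mathrm B_L)\geq 1$ for every $L$ does not yield the differential inequality, so you could land in neither usable case. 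This is why the paper (following Ziesche) defines $\varphi_\lambda(S)$ for arbitrary Borel sets $S$ with $\mathrm B_1\subset S$ and only needs the existence of \emph{some} such $S$ with $\varphi_\lambda(S)<1$.

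Second, and more seriously, the truncation-plus-limit scheme does not produce the uniformity it relies on. Applying Ziesche verbatim to $\mu_\kappa|_{[\varepsilon,1]}$ yields a rate $c_{\kappa,\varepsilon}$ coming from some set $S_\varepsilon$ with $\varphi^{(\varepsilon)}_\lambda(S_\varepsilon)<1$; since the total mass blows up as $\varepsilon\to 0$ (indeed the expected number of balls meeting $\partial S$ is already infinite, as $\int_{(0,1]}r\,d\mu_\kappa(r)=\infty$), there is no reason for $S_\varepsilon$ to stabilize or for $c_{\kappa,\varepsilon}$ to stay bounded away from $0$: "establishing the required estimates uniformly in $\varepsilon$" is precisely the content of the proposition, so asserting it is circular. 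Your perimeter-type heuristic does not settle this either; the finiteness of $\varphi_\lambda(S)$ hinges on the decay, across infinitely many scales, of the probability that the cluster of $\mathrm B_1$ built from balls contained in $S$ approaches $\partial S$, which is a genuinely multiscale estimate. The paper's route avoids truncation altogether: $\lambda\,dz\otimes\mu_\kappa|_{(0,1]}$ is $s$-finite, the Mecke equation and the Margulis--Russo formula remain valid for $s$-finite intensities, and the whole argument is run directly on the un-truncated model, producing a single $S$ with $\varphi_\lambda(S)<1$ whenever $\lambda<\lambda_c(\mu_\kappa|_{[0,1]})$. Finally, even granting uniform rates, the identification $\lim_{\varepsilon\to 0}\Prb_{\lambda,\mu_\kappa|_{[\varepsilon,1]}}(\mathrm B_1\longleftrightarrow\partial\mathrm B_l)=\Prb_{\lambda,\mu_\kappa|_{(0,1]}}(\mathrm B_1\longleftrightarrow\partial\mathrm B_l)$ needs an argument in the direction you use it: a continuous path in a countable union of closed balls need not be realized by balls of radius bounded below, so the connection event for the full measure is not obviously contained in the increasing union of the truncated ones.
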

The following proposition is the key observation to prove subcritical sharpness.
\begin{prop}\label{prop}Let $\mu$ that satisfies hypothesis \eqref{hyp}. Let $\kappa\ge \kappa_0$. We have for any $j\geq 1$, $l>1$, $\lambda\geq 0$
\begin{align*}
\Prb_{\lambda,\mu_\kappa|_{[0,\kappa^j]}}(\mathrm B_{\kappa^j}\longleftrightarrow \partial\mathrm B_{l\kappa^j})\leq\Prb_{\lambda,\mu_\kappa|_{(0,1]}}(\mathrm B_1\longleftrightarrow \partial\mathrm B_l).
\end{align*}
\end{prop}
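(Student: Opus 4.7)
The strategy is to rescale the problem to unit scale and then exhibit a Boolean-wise domination. Under the map $(x,r)\mapsto(x/\kappa^j,r/\kappa^j)$, a Poisson point process of intensity $\lambda\,dz\otimes\mu_\kappa|_{[0,\kappa^j]}$ is transformed into a Poisson point process of intensity $\lambda\kappa^{jd}\,dz\otimes\widetilde\mu$, where $\widetilde\mu$ is the pushforward of $\mu_\kappa|_{[0,\kappa^j]}$ by $r\mapsto r/\kappa^j$, and the event $\{\mathrm B_{\kappa^j}\longleftrightarrow\partial\mathrm B_{l\kappa^j}\}$ becomes $\{\mathrm B_1\longleftrightarrow\partial\mathrm B_l\}$. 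It therefore suffices to couple the two Boolean models on $\sR^d$ so that the rescaled one is contained in the $\mu_\kappa|_{(0,1]}$ one.

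Next I would decompose the measure $\kappa^{jd}\widetilde\mu$ on $(0,1]$ along the intervals $[\kappa^{-i'},\kappa^{1-i'}]$, $i'\ge 1$. A direct computation from the definition of $\mu_\kappa$ shows that for $i'>j$ the restrictions $\kappa^{jd}\widetilde\mu|_{[\kappa^{-i'},\kappa^{1-i'}]}$ and $\mu_\kappa|_{[\kappa^{-i'},\kappa^{1-i'}]}$ coincide (identity coupling), whereas for $1\le i'\le j$ the restriction of $\kappa^{jd}\widetilde\mu$ equals $\kappa^{jd}$ times the image of $\mu|_{[\kappa^{i-1},\kappa^i]}$ under $r\mapsto r/\kappa^j$, with $i=j-i'+1$. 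Because the PPPs at different scales are independent (the $\eta^{(i')}$ are i.i.d., and $\eta^{(0)}$ restricted to disjoint radius ranges also yields independent PPPs), it is enough to establish a Boolean domination at each scale $1\le i'\le j$ separately.

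Fix such a scale and set $a=\kappa^{j-i'}=\kappa^{i-1}$. After multiplying both radii and spatial coordinates by $\kappa^{i'}$, the comparison at this scale reduces to: a PPP of intensity $\lambda\,dz\otimes\nu$ on $\sR^d\times[1,\kappa]$, where $\nu(C)=a^d\mu(aC)$ for $C\subset[1,\kappa]$, is Boolean-dominated by a PPP of intensity $\lambda\,dz\otimes\mu|_{[1,\kappa]}$. Since $a\in\{1,\kappa,\kappa^2,\ldots,\kappa^{j-1}\}$, hypothesis \eqref{hyp} yields the tail inequality $\nu([r,\kappa])=a^d\mu([ar,a\kappa])\le\mu([r,\kappa])$ for every $r\in[1,\kappa]$. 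This domination of survival functions permits a quantile coupling from a common Poisson process $N$ on $\sR^d\times[0,\mu([1,\kappa])]$ of intensity $\lambda\,dz\,du$: at each $(x,u)\in N$ place a ball at $x$ of radius $Q_R(u):=\inf\{r\ge 1:\mu([r,\kappa])\le u\}$ for the right-hand process and, whenever $u\le\nu([1,\kappa])$, a ball at $x$ of radius $Q_L(u):=\inf\{r\ge 1:\nu([r,\kappa])\le u\}$ for the left-hand one. The tail inequality forces $Q_L(u)\le Q_R(u)$, so every ball of the left-hand process is contained in the ball of the right-hand process centered at the same point.

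The main obstacle is that assumption \eqref{hyp} controls only tails of $\mu$ on $[1,\kappa]$ and not arbitrary Borel sets, so a direct stochastic domination of the intensity measures on $\sR^d\times\sR_+$ is not available; the comparison must be carried out at the level of Boolean sets via the quantile (survival-function) construction. Combining the scale-by-scale couplings with the identity coupling for $i'>j$, and using independence across scales, one obtains a global coupling in which the rescaled left-hand Boolean set is contained in the right-hand one. Since $\{\mathrm B_1\longleftrightarrow\partial\mathrm B_l\}$ is an increasing event of the Boolean set, its probability under the former is at most its probability under the latter, which is the desired inequality.
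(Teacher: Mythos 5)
Your proof is correct and follows essentially the same route as the paper: rescale by $\kappa^j$, decompose the resulting intensity measure scale by scale, use hypothesis \eqref{hyp} to get the tail bound $a^d\mu([ar,a\kappa])\le\mu([r,\kappa])$ at each scale, and conclude by monotonicity of the connection event. The only difference is cosmetic: where the paper simply invokes stochastic domination of the intensity measures (in the tail sense $\nu([r,\infty))\le\mu([r,\infty))$), you make the underlying quantile coupling explicit, which is exactly the standard justification of that step.
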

Before proving these two propositions, let us prove the main theorem.
\begin{proof}[Proof of Theorem \ref{thm:main}]
 Let $\lambda<\lambda_c(\mu_\kappa|_{(0,1]})$.   Let $j,l\geq 1$. We have
\begin{align}\label{eq:1}
\Prb_{\lambda,\mu_\kappa}(\mathrm B_{l\kappa ^j}\longleftrightarrow \partial\mathrm B_{2l\kappa ^j})&\leq \Prb_{\lambda,\mu_\kappa|_{(0,\kappa^j ]}}(\mathrm B_{l\kappa ^j}\longleftrightarrow \partial\mathrm B_{2l\kappa ^j})\nonumber\\&\quad+\Prb_{\lambda,\mu_\kappa}\left(\exists (x,r)\in\eta_\kappa (\lambda):\,r\geq \kappa^j, \,\mathrm B_r^x\cap \mathrm B_{2l\kappa ^j}\neq\emptyset\right).
\end{align}
Let us start by estimating the second term in the inequality:
\begin{align*}
\Prb_{\lambda,\mu_\kappa}\left(\exists (x,r)\in\eta_\kappa (\lambda):\,r\geq \kappa^j, \,\mathrm B_r^x\cap \mathrm B_{2l\kappa ^j}\neq\emptyset\right)=1-\exp(-\lambda dz\otimes\mu(E))
\end{align*}
where $E:=\{(x,r): \|x\|_2\leq 2l\kappa ^j+r,\,r\geq \kappa ^j\}$.
We have
\begin{align*}
dz\otimes\mu(E)&=\int_{r\geq \kappa ^ j}\alpha_d (2l\kappa^j +r)^dd\mu(r)\le \alpha_d (4l)^d\int_{r\ge \kappa^j}r^dd\mu(r)
\end{align*}
where $\alpha_d$ is the volume of the unit ball in $\sR^d$.
It yields that
\begin{equation}\label{eq:2}
\Prb_{\lambda,\mu_\kappa}\left(\exists (x,r)\in\eta_\kappa (\lambda):\,r\geq \kappa^j, \,\mathrm B_r^x\cap \mathrm B_{2l\kappa ^j}\neq\emptyset\right)\leq \lambda \alpha_d (4l)^d\int_{r\ge \kappa^j}r^dd\mu(r).
\end{equation}
Let us now control the first term.
There exists a constant $c_d$ depending only on $d$ such that we can cover $\partial \mathrm B_{l\kappa^j}$ by at most $c_d l^{d-1}$ balls of radius $\kappa^j$ centered at $\partial \mathrm B_{l\kappa^j}$. By union bound, we get
\begin{equation}\label{eq:3}
\begin{split}
\Prb_{\lambda,\mu_\kappa|_{(0,\kappa^j ]}}(\mathrm B_{l\kappa ^j}\longleftrightarrow \partial\mathrm B_{2l\kappa ^j})&\leq 
 c_dl^{d-1}\Prb_{\lambda,\mu_\kappa|_{(0,\kappa^j]}}(\mathrm B_{\kappa^j}\longleftrightarrow \partial\mathrm B_{l\kappa^j})\\
&\leq c_d l^{d-1}\exp(-c_\kappa l)
\end{split}
\end{equation}
where we use in the last inequality Propositions \ref{prop} and \ref{prop:ziesche}.
Combining inequalities \eqref{eq:1}, \eqref{eq:2} and \eqref{eq:3}, we obtain 
\begin{align*}
\Prb_{\lambda,\mu_\kappa}(\mathrm B_{l\kappa ^j}\longleftrightarrow \partial\mathrm B_{2l\kappa ^j})\leq c_d l^{d-1}\exp(-c_\kappa l)+\lambda \alpha_d (4l)^d\int_{r\ge \kappa^j}r^dd\mu(r).
\end{align*}
Let $\ep>0$. We first choose $l$ large enough depending on $c_\kappa$ and $\ep$ and then $j$ large enough depending on $\kappa$, $\ep$ and $l$ so that
\begin{align*}
\Prb_{\lambda,\mu_\kappa}(\mathrm B_{l\kappa ^j}\longleftrightarrow \partial\mathrm B_{2l\kappa ^j})\leq \ep
\end{align*}
where we recall that since $\mu$ has a finite $d$-moment 
\[\lim_{j\rightarrow\infty}\int_{r\ge \kappa^j}r^dd\mu(r)=0.\]
It follows that
\begin{align*}
\inf_{r>0}\Prb_{\lambda,\mu_\kappa}(\mathrm B_r\longleftrightarrow\partial \mathrm B_{2r})=0
\end{align*}
and $\lambda\leq \widehat \lambda_c(\mu_\kappa)$.
Hence, \[ \widehat \lambda_c(\mu_\kappa)\geq \lambda_c(\mu_\kappa|_{(0,1]})\geq \lambda_c(\mu_\kappa).\]
The result follows from the fact that $\widehat \lambda_c(\mu_\kappa)\le \lambda_c(\mu_\kappa)$.
\end{proof} 
\subsection{Proof of Propositions \ref{prop:ziesche} and \ref{prop}}
Let $m>0$. Set $h_m$ be the contraction by $m$ that is $h_m(x):=x/m$ for $x\in\sR$.
Set $$\cT_m\mu:=m^dh_m*\mu$$ where $h_m*\mu$ is the pushforward of $\mu$ by $h_m$. We will need the following Lemma that characterized the distribution of a contracted in space Poisson point process.
\begin{lem}\label{lem:rescale}Let $m>0$ and $\lambda>0$. Let $\nu$ be a distribution on $\sR_+$. Let $\eta$ be a Poisson point process of intensity $\lambda dz\otimes \nu$.
Then $\eta/m$ is a Poisson point process of intensity $\lambda dz\otimes \cT_m\nu$.
\end{lem}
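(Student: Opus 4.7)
The plan is to derive this as a direct application of the mapping theorem (also called the displacement theorem) for Poisson point processes. I would introduce the measurable map $T\colon \sR^d\times\sR_+\to \sR^d\times \sR_+$ defined by $T(z,r):=(z/m,r/m)$, so that $\eta/m$ is simply the image point process $T(\eta)$. Since $T$ is a bijective, bi-measurable dilation, the mapping theorem immediately yields that $T(\eta)$ is again a Poisson point process, with intensity equal to the pushforward measure $T_{*}(\lambda\, dz\otimes \nu)$. The entire task then reduces to identifying this pushforward.

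Because $T$ acts on the two coordinates independently, the pushforward splits as a product. On the spatial factor, a standard linear change of variables shows that the pushforward of Lebesgue measure under $z\mapsto z/m$ is $m^d\, dz$: for a Borel set $B\subset \sR^d$ one has $\Leb(T^{-1}B)=\Leb(mB)=m^d\Leb(B)$. On the radius factor, the pushforward of $\nu$ under $r\mapsto r/m=h_m(r)$ is, by its very definition, $h_m*\nu$. Combining the two yields
\[T_{*}(\lambda\, dz\otimes \nu)=\lambda\, m^d\, dz\otimes (h_m*\nu)=\lambda\, dz\otimes (m^d h_m*\nu)=\lambda\, dz\otimes \cT_m\nu,\]
which is exactly the intensity claimed in the statement.

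I do not expect any genuine obstacle here. The only thing to be careful about is the bookkeeping of the factor $m^d$: it originates from the spatial Jacobian of the dilation, and one has to verify that grouping it with the radius coordinate reproduces precisely the author's definition $\cT_m\nu:=m^d h_m*\nu$. One could alternatively prove the lemma from scratch by checking that, for any relatively compact Borel set $A\subset\sR^d\times \sR_+$, the random variable $\card(\eta/m\cap A)=\card(\eta\cap T^{-1}A)$ is Poisson of the correct parameter and that disjoint such sets yield independent counts; but invoking the mapping theorem is cleaner and entirely sufficient.
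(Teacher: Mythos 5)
Your proposal is correct and follows essentially the same route as the paper: the paper likewise takes for granted that $\eta/m$ remains a Poisson point process and then identifies its intensity by the same change of variables, checking that $(dz\otimes\nu)(mE)=(dz\otimes\cT_m\nu)(E)$, which is exactly your pushforward computation with the spatial Jacobian $m^d$ absorbed into the radius marginal. Your explicit appeal to the mapping theorem simply makes formal what the paper declares ``clear.''
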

From this lemma, we can deduce the following straightforward corollary.
\begin{cor}\label{cor}Let $\kappa\ge 1$. We have $$\mu_\kappa=\mu+\sum_{j=1}^\infty \cT_{\kappa^j}\mu|_{[1,\kappa]}.$$

\end{cor}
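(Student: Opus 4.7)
My plan is to read off the intensity measure of $\eta_\kappa(\lambda)$ scale by scale, using Lemma~\ref{lem:rescale} together with the restriction and superposition properties of Poisson point processes.

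First, for each $i\ge 1$, I would observe that by the restriction property of Poisson point processes, the trace $\eta^{(i)}(\lambda)\cap(\sR^d\times[1,\kappa])$ is a Poisson point process of intensity $\lambda\,dz\otimes\mu|_{[1,\kappa]}$. Applying Lemma~\ref{lem:rescale} with $m=\kappa^i$ and $\nu=\mu|_{[1,\kappa]}$, the rescaled copy
\[
\frac{1}{\kappa^i}\bigl(\eta^{(i)}(\lambda)\cap(\sR^d\times[1,\kappa])\bigr)
\]
is therefore a Poisson point process of intensity $\lambda\,dz\otimes\cT_{\kappa^i}(\mu|_{[1,\kappa]})$; in the radial variable this measure is supported on $[\kappa^{-i},\kappa^{-(i-1)}]$.

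Next, since the processes $(\eta^{(i)}(\lambda))_{i\ge 0}$ are independent by construction, the superposition theorem for Poisson point processes yields that $\eta_\kappa(\lambda)$ itself is Poisson, with intensity equal to the sum of the individual intensities, namely
\[
\lambda\,dz\otimes\mu+\sum_{i=1}^\infty \lambda\,dz\otimes\cT_{\kappa^i}(\mu|_{[1,\kappa]})=\lambda\,dz\otimes\Bigl(\mu+\sum_{i=1}^\infty\cT_{\kappa^i}(\mu|_{[1,\kappa]})\Bigr).
\]
Comparing with the definition of $\mu_\kappa$ as the measure for which $\eta_\kappa(\lambda)$ has intensity $\lambda\,dz\otimes\mu_\kappa$ then gives the claimed identity.

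The statement is really a bookkeeping exercise, so there is no serious obstacle. The only mild technical point I would take care to justify is that the superposition is well defined: the radial supports $[\kappa^{-i},\kappa^{-(i-1)}]$ of the summands are (up to a single endpoint) disjoint subsets of $(0,1]$, so on any set of radii bounded below by some $\kappa^{-N}$ only finitely many terms contribute, each with finite mass. This ensures that $\mu+\sum_{i\ge 1}\cT_{\kappa^i}(\mu|_{[1,\kappa]})$ is $\sigma$-finite and makes the invocation of the superposition theorem legitimate.
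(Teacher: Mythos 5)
Your proposal is correct and is exactly the argument the paper intends: the paper states the corollary as a "straightforward" consequence of Lemma~\ref{lem:rescale} without writing out details, and your proof (restriction property, Lemma~\ref{lem:rescale} applied to each rescaled copy with $m=\kappa^i$, then superposition of the independent intensities) is the natural fleshing-out of that one-line deduction. The remark on $\sigma$-finiteness via the essentially disjoint radial supports is a sensible extra precaution, consistent with the paper's later discussion of $s$-finite intensities.
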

\begin{proof}[Proof of Lemma \ref{lem:rescale}]It is clear that $\eta/m$ is still a Poisson point process, we only need to prove that its intensity is $\lambda dz\otimes \cT_m\nu$. Let $E\subset \sR^d\times \sR_+$. We claim that 
\begin{equation}
(dz\otimes \nu)(mE)=(dz\otimes \cT_m\nu)(E).
\end{equation}
Indeed, we have
\begin{align*}
(dz\otimes \nu)(mE)=\int_{(z,r)\in mE}dz d\nu(r)&=\int_{(mz,mr)\in mE} m^{d}dzd\nu(r/m)\\&=\int_{(z,r)\in E}dzd\cT_m\nu(r)=(dz\otimes \cT_m\nu)(E).
\end{align*}
\end{proof}

Thanks to Corollary \ref{cor}, we can now prove Proposition \ref{prop}.

\begin{proof}[Proof of Proposition \ref{prop}]Thanks to Lemma \ref{lem:rescale}, we have for $l>1$ and $j\ge 0$
\begin{equation*}
\Prb_{\lambda,\mu_\kappa|_{(0,\kappa^j]}}(\mathrm B_{\kappa^j}\longleftrightarrow \partial\mathrm B_{l\kappa^j})=\Prb_{\lambda,\cT_{\kappa^j}\mu_\kappa|_{(0,\kappa^j]}}(\mathrm B_{1}\longleftrightarrow \partial\mathrm B_{l}).
\end{equation*}
To complete the proof, let us prove the following inequality
\begin{align*}
\Prb_{\lambda,\cT_{\kappa^j}\mu_\kappa|_{(0,\kappa^j]}}(\mathrm B_{1}\longleftrightarrow \partial\mathrm B_{l})\leq \Prb_{\lambda,\mu_\kappa|_{(0,1]}}(\mathrm B_{1}\longleftrightarrow \partial\mathrm B_{l}) .
\end{align*}
Using Corollary \ref{cor}, we have
\begin{align*}
\cT_{\kappa^j}\mu_\kappa|_{(0,\kappa^j]}&=\cT_{\kappa^j}\mu|_{[1,\kappa^j]}+\sum_{k=1}^\infty\cT_{\kappa^j}\cT_{\kappa^k}\mu|_{[1,\kappa]}\\
&= \sum_{k=1}^{j}\cT_{\kappa^{k}}\cT_{\kappa^{j-k}}\mu|_{[\kappa^{j-k},\kappa ^{j-k+1}]}+\sum_{k=j+1}^\infty\cT_{\kappa^k}\mu|_{[1,\kappa]}
\end{align*}
Let us prove that for any $k\ge 1$ $\cT_{\kappa^{k}}\mu|_{[\kappa^{k},\kappa ^{kk+1}]}\preceq \mu|_{[1,\kappa]}$ where we write $\mu\succeq \nu$ when $\mu$ stochastically dominates $\nu$ (for every $r>0$, we have
$\mu([r,+\infty))\ge \nu([r,+\infty))$).
Let $\kappa_0$ be as in hypothesis \ref{hyp}. Let $\kappa\ge \kappa_0$.
By hypothesis \eqref{hyp}, we have for $r\in[1,\kappa]$
\[\cT_{\kappa ^{k}}\mu|_{[\kappa^{k},\kappa^{k+1}]}([r,\kappa])=\kappa ^{dk}\mu([\kappa^{k}r,\kappa ^{k+1}]\le \mu([r,\kappa]).\]
It yields that 
\begin{equation*}
\cT_{\kappa^j}\mu_\kappa|_{(0,\kappa^j]}
\preceq  \sum_{k=1}^{j}\cT_{\kappa^{k}}\mu|_{[1,\kappa]}+\sum_{k=j+1}^\infty\cT_{\kappa^k}\mu|_{[1,\kappa]}=\mu_\kappa|_{(0,1]}.
\end{equation*}
Hence, we have
\begin{align*}
\Prb_{\lambda,\cT_{\kappa^j}\mu_\kappa|_{(0,\kappa^j]}}(\mathrm B_{1}\longleftrightarrow \partial\mathrm B_{l})\leq \Prb_{\lambda,\mu_\kappa|_{(0,1]}}(\mathrm B_{1}\longleftrightarrow \partial\mathrm B_{l}) .
\end{align*}
This yields the proof.
\end{proof}

Finally, let us explain how the proof of Ziesche \cite{ziesche} can be extended in the general case of $\sigma$-finite measure (Proposition \ref{prop:ziesche}).
\begin{proof}[Sketch of the proof of Proposition \ref{prop:ziesche}]
First note that $\lambda ds\otimes \mu_\kappa$ is a $s$-finite measure on $\sR^ d\times \sR_+\setminus\{0\}$ (hence $\sigma$- finite), that is, it can be written as a countable sum of finite measures. The Mecke equation (see Theorem 4.1 in \cite{last_penrose_2017}) and the Margulis-Russo formula (see \cite{Last14}) both hold for intensity measures that are $s$-finite. Denote by $\mathcal B(\sR^ d)$ the Borelian subsets of $\sR^ d$. For each $S\in\mathcal B(\sR^ d)$ such that $\mathrm B_1\subset S$, we define
\begin{equation}
\varphi_{\lambda}(S):=\lambda\int_{r\in(0,1]}\int_{z\in\sR^d}\mathbf{1}_{\mathrm B_r^z\cap\partial S\neq\emptyset}\,\Prb_{\lambda,\mu|_{(0,1]}}\left(\mathrm B_1\stackrel{\cO(\{(w,s)\in\eta: \mathrm B_s^w\subset S\})}{\longleftrightarrow }\mathrm B_r^z\right) dz\,d\mu_\kappa(r).
\end{equation}
This corresponds to the expected number of open balls intersecting the boundary of $S$ that are connected to $\mathrm B_1$ inside $S$.
The arguments of Ziesche hold in that context, in particular, when $\lambda<\lambda_c(\mu_\kappa|_{[0,1]})$, there exists $S\in\mathcal B(\sR^ d)$ such that $\mathrm B_1\subset S$ and $\varphi_{\lambda}(S)<1$. We conclude the existence of $c_\kappa>0$ depending on $\kappa $ and $\lambda$ such that inequality \eqref{eq:ineqziesche} holds.

\end{proof}

\paragraph{Acknowledgements}
The author would like to thank Vincent Tassion for fruitful discussions that initiated this project.
This project has received funding from the European Research Council (ERC) under the European Union’s Horizon 2020 research and innovation program (grant agreement No
851565). 
\bibliographystyle{plain}
\bibliography{biblio}
\end{document}